\documentclass[11pt]{article}
\usepackage[left=1in,right=1in,bottom=1in,top=1in]{geometry}
\usepackage{amsmath}
\usepackage{amsfonts}
\usepackage{amssymb}
\usepackage{amsthm}
\usepackage{young}
\usepackage[vcentermath]{youngtab}
\usepackage{tikz}

\usepackage{algorithm}
\usepackage[noend]{algpseudocode}
\usepackage{amsmath, amssymb, amsthm}
\usepackage{blindtext}
\usepackage{hyperref} 
\usepackage{cleveref}
\usepackage{comment}
\usepackage{enumerate}
\usepackage{fullpage}
\usepackage{mathtools}
\usepackage{subcaption}

\makeatletter
\newtheorem*{rep@theorem}{\rep@title}
\newcommand{\newreptheorem}[2]{%
\newenvironment{rep#1}[1]{%
 \def\rep@title{#2 \ref{##1}}%
 \begin{rep@theorem}}%
 {\end{rep@theorem}}}
\makeatother

\topmargin 0in
\oddsidemargin .01in
\textwidth 6.5in
\textheight 9in
\evensidemargin 1in
\addtolength{\voffset}{-.6in}
\addtolength{\textheight}{0.22in}
\parskip \medskipamount
\parindent	0pt

\newtheorem{theorem}{Theorem}
\newtheorem{lemma}[theorem]{Lemma}
\newreptheorem{theorem}{Theorem}

\newtheorem{remark}[theorem]{Remark}
\newtheorem{definition}[theorem]{Definition}
\newtheorem{proposition}[theorem]{Proposition}

\newcommand{\pr}[1]{\mathbb{P}\!\left(#1\right)}
\newcommand{\expect}[1]{\mathbb{E}\!\left[#1\right]}
\newcommand{\var}[1]{\operatorname{Var}\!\left[#1\right]}
\newcommand{\prcond}[2]{\mathbb{P}\!\left(#1\;\middle\vert\;#2\right)}

\title{Optimal strong stationary times for random walks on the chambers of a hyperplane arrangement}

\date{}

\author{Evita Nestoridi \thanks{Princeton University, Department of Mathematics.
 Email: \href{exn@princeton.edu}{\texttt{exn@princeton.edu}}}}

\begin{document}
\maketitle
\begin{abstract}
This paper studies Markov chains on the chambers of real hyperplane arrangements, a model that generalizes famous examples, such as the Tsetlin library and riffle shuffles. We discuss cutoff for the Tsetlin library for general weights, and we give an exact formula for the separation distance for the hyperplane arrangement walk. We introduce lower bounds, which allow for the first time to study cutoff for hyperplane arrangement walks under certain conditions. Using similar techniques, we also prove a uniform lower bound for the mixing time of Glauber dynamics on a monotone system.
\end{abstract}

\section{Introduction}
Hyperplane arrangement walks have been studied by many different authors. The eigenvalues and eigenvectors for such walks have been characterized (\cite{BHR}, \cite{BD}, \cite{AthD}, \cite{Pike}), while a coupling argument \cite{AthD} gives upper bounds on the total variation distance that are not always sharp and there are no lower bounds. We introduce  a strong stationary time, which can be used to give both upper and lower bounds for the separation distance mixing time. In many cases, this strong stationary time is optimal and leads to a proof of existence of cutoff. This is the first time that lower bounds and cutoff are discussed in the setting of hyperplane arrangement walks.
 
%
In this paper, we denote by $\mathcal{A}$ a central  hyperplane arrangement, that is a finite collection of hyperplanes in $\mathbb{R}^n$ that pass through the origin. 
The hyperplanes of $\mathcal{A}$ cut $\mathbb{R}^n$ in finitely many connected, open components, called chambers. A chamber can be expressed as a vector with $m=|\mathcal{A}|$ coordinates,  by keeping track of whether it is on the ($+$) or ($-$) half space of each hyperplane. Similarly we can express every face as a vector with $m$ coordinates, where each coordinate is either $+$, $-$ or $0$, where $0$ indicates that the face lies on the hyperplane. Denote by $\mathcal{C}$ be the set of chambers and $\mathcal{F}$ denote the set of faces. 

Let $w$ be a probability measure on $\mathcal{F}$ such that for every hyperplane $H \in \mathcal{A}$ there is an $F$ with $w(F)>0$ and $F \nsubseteq H$. Given such a $w$, we study the following process on $\mathcal{C}$: from $C \in \mathcal{C}$, choose $F$ according to $w$ and move to $FC$, that is the unique chamber neighboring $F$ which is the nearest to $C$ (in the sense of crossing the smallest number of hyperplanes). Algebraically, $FC$ is the chamber whose coordinates agree with the non-zero coordinates of $F$, while the rest are determined by $C$. This action $\mathcal{F} \times \mathcal{C} \rightarrow \mathcal{C}$ extends naturally to an associative product $\mathcal{F} \times \mathcal{F} \rightarrow \mathcal{F}$. Let $T$ be the first time that the product of the faces picked is a chamber. The main theorem of this paper provides tools for obtaining upper and lower bounds for the mixing time and studying cutoff for this Markov chain.

\begin{theorem}\label{opt2}
 Assume that a group $G$ acts on $\mathbb{R}^n$ preserving the hyperplane arrangement $\mathcal{A}$ and acting transitively on the set of chambers. If the weights $w$ are $G$-invariant then  $$s(t)= \pr{T>t}$$ for every $t$, where $s(t)$ is the separation distance of the walk at time $t$.
\end{theorem}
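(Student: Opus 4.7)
The equality will follow from the standard upper bound $s(t)\le \pr{T>t}$ that holds whenever $T$ is a strong stationary time, together with a matching lower bound obtained by exhibiting a single chamber whose value $P^{t}(C_0,\cdot)$ realizes the minimum over $\mathcal{C}$.

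First I would verify that $T$ is a strong stationary time and that the stationary distribution $\pi$ is uniform on $\mathcal{C}$. Write $X_t=(F_t\cdots F_1)C_0$; by associativity of the face-product, on the event $\{T\le t\}$ the prefix $F_T\cdots F_1$ is already a chamber and absorbs all later multiplications, so $X_t=F_T\cdots F_1$ is independent of $C_0$. For any $g\in G$, the tuple $(gF_1,\ldots,gF_t)$ has the same law as $(F_1,\ldots,F_t)$ because $w$ is $G$-invariant, and $G$ acts as semigroup automorphisms of $\mathcal{F}$ (it preserves the arrangement), so $g(F_t\cdots F_1)=(gF_t)\cdots(gF_1)$. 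Consequently the joint law of $(T,X_T)$ is $G$-invariant in its chamber coordinate, and since $G$ acts transitively on $\mathcal{C}$ we obtain $\pr{T\le t,\,X_T=C}=\pr{T\le t}/|\mathcal{C}|$ for every $C\in\mathcal{C}$. The same symmetry argument applied to the stationary measure forces $\pi$ to be uniform. This is the defining property of a strong stationary time, yielding $s(t)\le \pr{T>t}$.

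For the reverse inequality I would single out the antipodal chamber $C^\star:=-C_0$. Because $\mathcal{A}$ is central the arrangement is invariant under $x\mapsto -x$, so $-C_0\in\mathcal{C}$. Decompose
\[
P^{t}(C_0,-C_0)=\pr{F_t\cdots F_1=-C_0,\,T\le t}+\pr{(F_t\cdots F_1)C_0=-C_0,\,T>t}.
\]
The first summand equals $\pr{T\le t}/|\mathcal{C}|$ by the previous paragraph. For the second, suppose $F:=F_t\cdots F_1$ is a face that is \emph{not} a chamber; then some coordinate $F^{i}=0$, so $(FC_0)^{i}=C_0^{i}\ne -C_0^{i}$, and hence $FC_0\ne -C_0$. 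Thus the second summand vanishes, giving $P^{t}(C_0,-C_0)=\pr{T\le t}/|\mathcal{C}|$. Dividing by $\pi(-C_0)=1/|\mathcal{C}|$,
\[
s(t)\ge 1-\frac{P^{t}(C_0,-C_0)}{\pi(-C_0)}=\pr{T>t},
\]
and combining the two bounds proves the theorem.

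\textbf{Where the difficulty lies.} The upper bound direction is essentially a clean bookkeeping of the symmetry hypothesis: one must be careful that $G$ really acts on the face semigroup by automorphisms so that products are intertwined with the group action, and that this plus transitivity on $\mathcal{C}$ gives a uniform conditional law on $\{T\le t\}$. The more delicate and arguably non-obvious step is identifying $-C_0$ as a worst-case chamber for separation and proving the coordinate-matching obstruction: this is what forces the partial-product contribution with $T>t$ to vanish and is what makes $T$ \emph{optimal} rather than merely feasible. Once the antipodal chamber is recognized as a chamber (using centrality) and the sign obstruction is observed, the matching lower bound is immediate.
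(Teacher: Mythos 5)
Your proposal follows essentially the same two-part structure as the paper: establish that $T$ is a strong stationary time via $G$-equivariance (the paper's Lemma~\ref{T3}), and get the matching lower bound from the antipodal chamber (the paper's Theorem~\ref{hyperplane}). The ideas are all there, but there is one genuine error in the way you set up the upper-bound half.

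You write that on $\{T\le t\}$ ``the prefix $F_T\cdots F_1$ is already a chamber and absorbs all later multiplications, so $X_t=F_T\cdots F_1$.'' This is false: the product is accumulated on the \emph{left}, so $F_t\cdots F_1 = F_t\cdots F_{T+1}\bigl(F_T\cdots F_1\bigr)$, and multiplying a chamber $C$ on the left by a face $F$ gives $FC$, which is generally a different chamber. What is true is that $F_t\cdots F_1$ is still a chamber whenever $T\le t$ (a face times a chamber is a chamber), so $X_t=(F_t\cdots F_1)x_0 = F_t\cdots F_1$ is independent of the starting chamber; but it equals $F_t\cdots F_1$, not $F_T\cdots F_1$, and in general $X_t\ne X_T$. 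The same conflation appears downstream when you cite ``the previous paragraph'' to evaluate $\pr{F_t\cdots F_1=-C_0,\,T\le t}$: what you actually established there concerned $(T,X_T)$, whereas the lower-bound decomposition needs the conditional law of $X_t$ given $T\le t$. Fortunately the fix is routine: the identical $G$-equivariance argument applies directly to $X_t=F_t\cdots F_1$ (on $\{T\le t\}$ this product commutes with the $G$-action just as $X_T$ does), so $\prcond{X_t=C}{T\le t}=1/|\mathcal{C}|$ for every chamber $C$, and both halves of your argument go through. Once this is repaired, the proof is correct and matches the paper's argument; your observation that the sign obstruction forces the $T>t$ contribution to vanish is exactly the content of the paper's Theorem~\ref{hyperplane}.
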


Theorem $3.3$ of \cite{FLM} discusses lower bounds and upper bounds for $\pr{T>t}$ and it gives cutoff for cases such as when all faces with positive weight have at most $M$ non-zero coordinates, where $M=m^{o(1)}$, and $b = \sum_{F \nsubseteq H_i }w(F)$ is independent of $i$.

To discuss cases where a face with large number of non-zero coordinates might be collected, just as in the case of riffle shuffles, assume that there are $b$ and $d$ bounded away from $1$ so that for every $i,j$ we have that
 \begin{equation}\label{coupn}
b = \sum_{F \nsubseteq H_i }w(F)
\mbox{ and } 
 \sum_{F \nsubseteq H_i \cup H_j}w(F)= d.
\end{equation}
If $d=0$ then discussing cutoff for the hyperplane arrangement walk under the assumptions of Theorem \ref{opt2} is merely a coupon collecting problem. The following proposition discusses cutoff for the cases that Theorem \ref{opt2} applies to.
\begin{proposition}\label{cut}
Let $\mathcal{A}$ be a central hyperplane arrangement that satisfies the assumptions of Theorem \ref{opt2} and equation \eqref{coupn} with $b$ bounded away from one,  $b \leq \frac{1+d}{2}$ and $0 < d \leq b^2$.
Then the random walk on the chambers of $\mathcal{A}$ exhibits cutoff with respect to the separation distance at time $ \log_{\frac{1}{1-b}} m$ with window $\frac{1}{b}$.
\end{proposition}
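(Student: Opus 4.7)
The plan is to use Theorem~\ref{opt2} to reduce the problem to two-sided tail bounds on the strong stationary time $T$, and then treat the upper tail by a union bound and the lower tail by a second-moment argument on the number of ``uncollected'' hyperplanes. For each hyperplane $H_i\in\mathcal{A}$, let $A_i(t)$ denote the event that every face chosen in the first $t$ steps is contained in $H_i$; equivalently, the $i$-th coordinate of the accumulated face product is still $0$. By definition of $T$, one has $\{T>t\}=\bigcup_{i=1}^m A_i(t)$, and the hypothesis \eqref{coupn} yields $\pr{A_i(t)}=(1-b)^t$ and, via inclusion–exclusion on the weight of faces lying in $H_i\cap H_j$, $\pr{A_i(t)\cap A_j(t)}=(1-2b+d)^t$ for $i\neq j$. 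The condition $b\leq (1+d)/2$ is precisely what forces $1-2b+d\geq 0$, so this is a valid probability.

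For the upper side of cutoff, the union bound gives $\pr{T>t}\leq m(1-b)^t$. Using the elementary inequality $-\log(1-b)\geq b$ valid for $b\in(0,1)$, the substitution $t=\log_{1/(1-b)}m+c/b$ yields $\pr{T>t}\leq e^{-c}$, which tends to $0$ as $c\to\infty$. By Theorem~\ref{opt2}, this is exactly the decay of $s(t)$ past the proposed cutoff time.

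For the lower side, set $X=\sum_{i=1}^m \indt{A_i(t)}$ so that $\{T>t\}=\{X>0\}$, and apply Paley–Zygmund: $\pr{X>0}\geq (\expect{X})^2/\expect{X^2}$. A direct computation using the pairwise probability above gives
\[
\frac{\expect{X^2}}{(\expect{X})^2}\;=\;\frac{1}{\expect{X}}\;+\;\frac{m-1}{m}\left(\frac{1-2b+d}{(1-b)^2}\right)^t,
\]
and the hypothesis $d\leq b^2$ forces $1-2b+d\leq (1-b)^2$, so the second summand is at most $1$ for every $t\geq 0$. Choosing $t=\log_{1/(1-b)}m-c/b$ makes $\expect{X}=(1-b)^{-c/b}\geq e^c$ (again by $-\log(1-b)\geq b$), so $\pr{X>0}\geq e^c/(1+e^c)\to 1$ as $c\to\infty$. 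Combining this with the upper bound and Theorem~\ref{opt2} gives cutoff at $\log_{1/(1-b)}m$ with window $1/b$, as claimed.

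The main obstacle is the second-moment step: the lower bound is only as sharp as the covariance between the $A_i(t)$'s permits. The role of the hypothesis $d\leq b^2$ is exactly to say that pairs of hyperplanes remain uncollected with probability decaying at least as fast as the square of the single-hyperplane probability, so that the variance of $X$ is of the same order as its mean. The one computation that is not mechanical is verifying the identity $\pr{A_i(t)\cap A_j(t)}=(1-2b+d)^t$ via inclusion–exclusion on the weights in \eqref{coupn}; everything else is routine.
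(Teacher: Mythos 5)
Your proof follows essentially the same route as the paper's: invoke Theorem~\ref{opt2} to reduce to two-sided tail bounds on $T$, use the union bound $\pr{T>t}\leq m(1-b)^t$ for the upper tail, and a second-moment argument on the number of hyperplanes not yet ``hit'' for the lower tail, with $b\leq(1+d)/2$ ensuring $1-2b+d\geq 0$ and $d\leq b^2$ controlling the pairwise term exactly as you describe. The only cosmetic difference is that you finish the lower bound with Paley--Zygmund whereas the paper applies Chebyshev to $|[m]\setminus X^t|$; both steps extract the same conclusion from the identical first- and second-moment computations.
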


Equation \eqref{coupn} holds if for example the $G$-action is $2$-transitive on the hyperplanes. The assumptions of Proposition \ref{cut} hold for the riffle shuffles, which is introduced in Section \ref{braid}, since $b= \frac{1}{2}$ and $d=\frac{1}{4}$. It can also be applied for the case of a non-local random walk on the hypercube as explained on Section \ref{examples}.

To give explicit bounds for the separation distance mixing time in the general case or to say whether there is cutoff, one needs to study a complicated coupon collector problem. Coupon collecting with different weights has been studied in special cases by Diaconis \cite{cutoff}, Fill \cite{Fill-Tsetlin} and Stadje \cite{Stadje}. 

We study specifically the Tsetlin library: pick a book $i$ with probability $w_i$ and move it to the front. This is a very well studied Markov chain mainly because of its use in dynamic file maintenance and cache maintenance (\cite{Do}, \cite{FHo}, \cite{Phatarfod}). Fill \cite{Fill-Tsetlin} has already studied the separation distance for this model and has proven cutoff for special cases of weights. We use poissonization to prove the following new cutoff result:
\begin{theorem}\label{Tse}
For the Tsetlin Library with weights $w_i$, let $t_*$ be the unique solution to $\sum_{i=1}^n e^{-w_it_*}= \frac{1}{2}$. If $t_* \min_{i} \{ w_i\} \rightarrow \infty$ and $t_* \min_{i} \{ w^2_i\}$ is bounded for all $n$, then
\begin{enumerate}
\item[(a)] If $t= t_* + c \frac{1}{\min_{i} \{ w_i\} }$, then 
$$s(t) \leq 1- e^{-\frac{e^{-\frac{c}{2}}}{2}} +\frac{  4\min_{i} \{ w^2_i\}t_* + 2c \min_{i} \{ w_i\} }{c^2},$$
where $c>0$.
\item[(b)]  If $t= t_* -  2c\frac{1}{\min_{i} \{ w_i\} }$, then 
$$s(t) \geq     1- e^{-\frac{e^{\frac{c}{2}}}{2}}- \frac{4 \min \{w_i^2\}t_* -2c \min \{w_i\} }{c^2}  -  \frac{1}{c},$$
where $0<c < \frac{t_*\min_{i} \{ w_i\}}{  2}$.
\end{enumerate}
where $s(t)$ is the separation distance.
\end{theorem}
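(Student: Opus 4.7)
The plan has three parts: (i) identify the separation distance with a weighted coupon-collector tail; (ii) Poissonize to replace the maximum-first-occurrence time by a maximum of independent exponentials; (iii) de-Poissonize via a Chebyshev bound on a Poisson count. For (i), the identification $s(t)=\pr{T>t}$ for the Tsetlin library is due to Fill~\cite{Fill-Tsetlin} (and can also be read off Theorem~\ref{opt2} after a suitable symmetrization), where $T=\max_i T_i$ and $T_i$ is the first step at which book $i$ is picked.

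For (ii), embed the chain in continuous time by marking the points of a rate-$1$ Poisson process independently, assigning book $i$ with probability $w_i$; the per-book processes are then independent Poisson of rate $w_i$. Hence $\tilde T := \max_i \tilde T_i$ with $\tilde T_i \sim \mathrm{Exp}(w_i)$ independent, giving
\[
\pr{\tilde T > t} \;=\; 1-\prod_{i=1}^n\bigl(1-e^{-w_i t}\bigr).
\]
Write $f(t):=\sum_i e^{-w_i t}$, so $f(t_*)=1/2$. The pointwise inequality $w_i\ge\min_j w_j$ yields $f(t_*+h)\le e^{-h\min_j w_j}/2$ for $h\ge 0$ and $f(t_*+h)\ge e^{-h\min_j w_j}/2$ for $h\le 0$. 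Under $t_*\min_i w_i\to\infty$ the values $e^{-w_i t}$ are small near $t=t_*$, so the elementary estimate $-\log(1-x)=x+O(x^2)$, together with the hypothesis $t_*\min_i w_i^2 = O(1)$ (which controls $\sum_i e^{-2w_it}$), yields $\pr{\tilde T>t}=1-e^{-f(t)}+o(1)$ uniformly on the relevant window.

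For (iii), use that with $N_t\sim\mathrm{Poisson}(t)$ independent of $T$ one has $\{\tilde T>t\}=\{T>N_t\}$, so $\pr{\tilde T>t}=\sum_k\pr{T>k}\pr{N_t=k}$. Monotonicity of $k\mapsto\pr{T>k}$ gives $\pr{T>n}\le\pr{\tilde T>t'}/\pr{N_{t'}\le n}$ when $t'<n$, and $\pr{T>n}\ge\pr{\tilde T>t''}-\pr{N_{t''}<n}$ when $t''>n$. For (a) apply the first at a $t'$ sitting between $t_*$ and $n=t_*+c/\min_i w_i$ so that $f(t')\le e^{-c/2}/2$ and $\pr{N_{t'}>n}\le t'/(n-t')^2 = O(t_*\min_i w_i^2/c^2)$ by Chebyshev. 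For (b) apply the second symmetrically at a $t''$ between $t_*$ and $n=t_*-2c/\min_i w_i$ to obtain $f(t'')\ge e^{c/2}/2$, so $\pr{\tilde T>t''}\ge 1-\exp(-e^{c/2}/2)$; the extra $1/c$ term in the statement absorbs the mean-to-mode gap of $N_{t''}$ relative to $n$.

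The main obstacle is keeping three error sources small simultaneously: the Taylor remainder $O(e^{-2w_it})$ from passing between $\prod_i(1-e^{-w_it})$ and $e^{-f(t)}$, the slack in the pointwise bound $e^{-w_ih}\le e^{-(\min_j w_j)h}$, and the Chebyshev concentration error. The conditions $t_*\min_i w_i\to\infty$ and $t_*\min_i w_i^2=O(1)$ are tailored to close the first two within the $c/\min_i w_i$ scale chosen for de-Poissonization, producing the clean explicit constants $e^{\pm c/2}/2$ in the final bounds.
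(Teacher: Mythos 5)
Your overall approach — identify $s(t)$ with a coupon-collector tail via Fill, Poissonize, control $\prod_i(1-e^{-w_it})$ by $e^{-\sum_i e^{-w_it}}$, de-Poissonize via a concentration bound on the Poisson count — is the same as the paper's, and the de-Poissonization inequalities you state are correct. But there is a genuine gap concerning what $T$ is, and it breaks part (b).

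Fill's strong stationary time $T$ is the first time that $n-1$ of the $n$ books have been touched, not the first time all of them have been touched; you define $T=\max_i T_i$, which is the latter and in the paper's notation is $T_n$, with $T\le T_n$ always. For part (a) this conflation is harmless, because the paper itself passes to $T_n$ via $\pr{T>t}\le\pr{T_n>t}$ and then Poissonizes $\pr{T_n>t}$. For part (b), however, a lower bound on $\pr{T_n>t}$ is \emph{not} a lower bound on $s(t)=\pr{T>t}$, precisely because $T\le T_n$. The paper bridges this with the inclusion
\[
\{T_n> t+c/\min_i w_i\}\subseteq \{T>t\}\cup\{T_n-T\ge c/\min_i w_i\},
\]
hence $\pr{T>t}\ge \pr{T_n>t+c/\min_i w_i}-\pr{T_n-T\ge c/\min_i w_i}$, and then bounds the subtraction by $1/c$ using Markov's inequality on $T_n-T$ (a geometric waiting time for the single remaining book, with mean $\le 1/\min_i w_i$). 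Your explanation that the $1/c$ term ``absorbs the mean-to-mode gap of $N_{t''}$'' is not correct: the Poisson concentration error in (b) is the Chebyshev term $O(t_*\min_i w_i^2/c^2)$, and the $1/c$ is an entirely separate contribution coming from the $T$-versus-$T_n$ bridge you omitted. Without this bridge, your argument only lower-bounds $\pr{T_n>t}$, which is not what part (b) asserts.
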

In section \ref{examples}, we prove a similar result for the nearest random walk on the hypercube with general weights.
The following theorem discusses a general lower bound for the separation distance.
\begin{theorem}\label{hyperplane}
For every central hyperplane arrangement, we have that
$$\pr{T>t} \leq s(t) ,$$
where $\pi(D) $ is the stationary measure and $s(t)$ is the separation distance. 
\end{theorem}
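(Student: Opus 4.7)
The plan is to exhibit a single chamber at which the ratio $P^t(C_0, D)/\pi(D)$ can be computed exactly, so that the maximum defining $s(t)$ dominates $\pr{T > t}$. The natural candidate is the antipodal chamber $D^{*} = -C_0$, obtained from the starting chamber $C_0$ by flipping the sign of every coordinate. Since $\mathcal{A}$ is central, the arrangement is invariant under $x \mapsto -x$, so $-C_0$ is itself a chamber of $\mathcal{A}$, with strictly positive stationary mass under the ergodicity assumption on the weights.

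Two facts then drive the argument. First, by the definition of the face--chamber action, $(F \cdot C_0)_i$ equals $F_i$ when $F_i \neq 0$ and equals $(C_0)_i$ otherwise. Therefore, if $F_1 \cdots F_t \cdot C_0 = -C_0$, then $F_1 \cdots F_t$ must disagree with $C_0$ on every coordinate, and in particular must be nonzero everywhere; this means $F_1 \cdots F_t$ is itself a chamber, so the event $\{T \leq t\}$ has occurred and moreover $F_1 \cdots F_t = -C_0$ holds identically. Second, once $F_1 \cdots F_T$ is a chamber, subsequent multiplications in the face semigroup leave it unchanged, so $F_1 \cdots F_t = F_1 \cdots F_T$ on $\{T \leq t\}$; combined with the Brown--Diaconis characterization of $\pi$ (the chamber first formed by the face product is $\pi$-distributed), this says that conditional on $\{T \leq t\}$ the chamber $F_1 \cdots F_t$ is distributed according to $\pi$ and independent of $T$.

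Putting the two facts together yields
$$P^t(C_0, -C_0) = \pr{F_1 \cdots F_t = -C_0,\, T \leq t} = \pi(-C_0)\, \pr{T \leq t},$$
so $1 - P^t(C_0, -C_0)/\pi(-C_0) = \pr{T > t}$, and this identity passes through the maximum in the definition of $s(t)$ to give $s(t) \geq \pr{T > t}$. I do not expect a real obstacle: the entire proof is essentially a one-trick argument, and the role of the central hypothesis is precisely to guarantee that the antipodal chamber $-C_0$ lies in $\mathcal{A}$, which is what forces every face product reaching $-C_0$ from $C_0$ to already be a full chamber.
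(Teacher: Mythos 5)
Your reduction to the antipodal pair is exactly right, and so is the first of your two facts: since $-C_0$ disagrees with $C_0$ on every coordinate of a central arrangement, reaching $-C_0$ forces the face product to be a full chamber, so $P^t(C_0,-C_0)=\pr{F_1\cdots F_t=-C_0}=\pr{\hat C=-C_0,\;T\le t}$ where $\hat C:=F_1\cdots F_T$. The gap is in your second fact. Brown--Diaconis gives the \emph{unconditional} law of $\hat C$ as $\pi$; it does not say $\hat C$ is independent of $T$, and in general it is not. Consequently the asserted identity $\pr{\hat C=-C_0,\;T\le t}=\pi(-C_0)\pr{T\le t}$ fails, and it can fail in the wrong direction for the bound you want.

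A concrete counterexample is the Tsetlin library on $3$ cards with weights $(w_1,w_2,w_3)=(1/2,1/4,1/4)$, for which $T$ is the first time two distinct cards are touched. At $t=2$ one has $\pr{T\le 2}=5/8$. Taking $-C_0$ to be the order ``$3$ then $2$ then $1$'', $\pr{\hat C=-C_0,\;T\le 2}=w_3w_2=1/16$, whereas $\pi(-C_0)\pr{T\le 2}=\tfrac{1}{12}\cdot\tfrac{5}{8}=5/96<1/16$. So $\prcond{\hat C=-C_0}{T\le t}>\pi(-C_0)$ here, and the chain $s(t)\ge 1-P^t(C_0,-C_0)/\pi(-C_0)\ge\pr{T>t}$ breaks at the last step for this particular starting chamber.

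The fix, which is what the paper is implicitly doing, is an averaging argument over the choice of starting chamber rather than fixing $C_0$ in advance. For every chamber $D$, taking $x_0=-D$, one has $P^t(-D,D)=\pr{F^t\cdots F^1=D}$, and summing over $D$ gives $\sum_D P^t(-D,D)=\pr{\text{the face product is a chamber at time } t}=\pr{T\le t}$. Hence $\sum_D \prcond{C^t_{-D}=D}{T\le t}=1=\sum_D\pi(D)$, so there exists at least one $D$ with $\prcond{C^t_{-D}=D}{T\le t}\le\pi(D)$. For that $D$ (and $x_0=-D$) your final display goes through verbatim and yields $s(t)\ge\pr{T>t}$. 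Centrality is used exactly as you say, to guarantee that $-D$ is a chamber for every $D$; but the role of the antipodal structure is to make the summed conditional probabilities total $1$, not to pin down a single chamber with $\prcond{\hat C=\cdot}{T\le t}\le\pi(\cdot)$.
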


The technique used in the proof of Theorem \ref{hyperplane} can be used in many other applications of broader interest. For example, section \ref{Ising} gives a uniform lower bound for the mixing time of  Glauber dynamics on monotone systems. A system $(\Omega, S,V, \pi)$ consists of a finite set $S$ of spins, a $V$ set of sites, the set of configurations $\Omega := S^V$ and a positive probability distribution on $\Omega$ which will serve as the stationary measure for the Glauber dynamics. If $S$ is totally ordered then $S^V$ is endowed with the coordinate-wise partial order. Following the notation of Peres and Winkler  \cite{PeresWinkler}, let $\sigma^s_u$ denote the configuration obtained from $\sigma$ by updating its value at $u$ to $s$. Let $\sigma^{\bullet}_u$ denote the set of possible configurations occurring by updating the value of $\sigma$ at $u$. A system $(\Omega, S,V, \pi)$ is called monotone if whenever $\sigma \leq \tau$ are two configurations in $\Omega$, then
$$\bigg\{ \frac{\pi(\sigma^s_u)}{\pi(\sigma^{\bullet}_u)} \bigg\}_{s \in S}  \preceq \bigg\{ \frac{\pi(\tau^s_u)}{\pi(\tau^{\bullet}_u)}\bigg\}_{s \in S}, $$
where $\preceq$ indicates that $\bigg\{ \frac{\pi(\tau^s_u)}{\pi(\tau^{\bullet}_u)}\bigg\}_{s \in S} $ stochastically dominates $\bigg\{ \frac{\pi(\sigma^s_u)}{\pi(\sigma^{\bullet}_u)} \bigg\}_{s \in S} $.

We introduce a short proof that generalizes the uniform lower bound for the separation distance and total variation mixing times proven by Ding and Peres in the special case of the Ising model \cite{DingPeres}.
\begin{theorem}\label{mon}
Let $(\Omega, S,V, \pi)$ be a monotone system, with $|V|=n$ number of sites and $\Omega \subset S^V$. Then for the Glauber dynamics on $\Omega$, if $t= n \log n-cn$, then 
$$s(t)\geq 1- e^{-e^c}$$
and if $t=\frac{1}{2}n \log n -cn$, then 
$$d(t) \geq \frac{1}{4} -\frac{1}{4} e^{-e^c},$$
where $c>0$ and $d(t)$ is the total variation distance at time $t$.
\end{theorem}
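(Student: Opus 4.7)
My plan is to mimic the coupon-collector argument behind Theorem~\ref{hyperplane}, adding the monotone coupling of two Glauber chains as the extra technical ingredient. Let $R_t \subseteq V$ denote the set of sites not yet chosen for update by time $t$, and set $T = \min\{t : R_t = \emptyset\}$. Negative association of the events $\{v \in R_t\}$ (for $v \neq w$, $\pr{v, w \in R_t} = (1-2/n)^t < ((1-1/n)^t)^2 = \pr{v \in R_t}\pr{w \in R_t}$) gives
$$\pr{R_t = \emptyset} \leq \prod_{v \in V}\pr{v \notin R_t} = \bigl(1-(1-1/n)^t\bigr)^n \leq e^{-n(1-1/n)^t},$$
and at $t = n\log n - cn$ one has $n(1-1/n)^t \to e^c$, so $\pr{T > t} \geq 1 - e^{-e^c}$.

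For the separation bound, I would run the Glauber chain from $\hat{1} := \max\Omega$ and couple it monotonically with a second chain $\sigma_t^\pi$ started from $\pi$, sharing the update sites and the monotone update rule so that $\sigma_t^{\hat{1}} \geq \sigma_t^\pi$ pointwise at all times. Taking as target $y = \hat{0} := \min\Omega$, three observations combine: (i) $\{\sigma_t^{\hat{1}} = \hat{0}\} \subseteq \{T \leq t\}$, because any $v \in R_t$ would force $\sigma_t^{\hat{1}}(v) = \hat{1}(v) \neq \hat{0}(v)$; (ii) $\sigma_t^{\hat{1}} = \hat{0}$ forces $\sigma_t^\pi = \hat{0}$ by monotonicity; (iii) since each Glauber step preserves $\pi$ regardless of which site is chosen, $\sigma_t^\pi$ is $\pi$-distributed conditional on any sequence of update sites, and in particular $\pr{\sigma_t^\pi = \hat{0} \mid T \leq t} = \pi(\hat{0})$. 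Combining these yields
$$P^t(\hat{1}, \hat{0}) = \pr{\sigma_t^{\hat{1}} = \hat{0},\, T \leq t} \leq \pr{\sigma_t^\pi = \hat{0},\, T \leq t} = \pi(\hat{0})\,\pr{T \leq t},$$
so that $s(t) \geq 1 - P^t(\hat{1},\hat{0})/\pi(\hat{0}) \geq \pr{T > t} \geq 1 - e^{-e^c}$.

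For the total-variation bound at $t = \tfrac{1}{2}n\log n - cn$, I would combine the separation estimate with the standard chain-halving inequality $s(2t) \leq 4\,d(t)(1-d(t)) \leq 4\,d(t)$, valid for any Markov chain. At $2t = n\log n - 2cn$, the separation bound applied with parameter $2c$ yields $s(2t) \geq 1 - e^{-e^{2c}} \geq 1 - e^{-e^c}$ (the latter inequality using $c \geq 0$), so $d(t) \geq s(2t)/4 \geq \tfrac{1}{4}(1 - e^{-e^c})$, as stated.

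The main technical obstacle is step~(iii) in the separation bound: the identity $\pr{\sigma_t^\pi = \hat{0} \mid T \leq t} = \pi(\hat{0})$. This rests on two separate facts---the per-step $\pi$-stationarity of a Glauber update (regardless of which site is chosen), and the fact that $\{T \leq t\}$ is measurable with respect to the update-site randomness only---both of which break down if one tries to replace Glauber updates by a less symmetric dynamics. The coupon-collector tail estimate and the TV--separation inequality invoked at the end are both standard.
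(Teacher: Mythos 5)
Your proof is correct in its main structure and reaches the same reduction as the paper---namely, $s(t) \geq \pr{T>t}$ where $T$ is the first time every site has been updated---but via a genuinely different route. The paper invokes Lemma~2.1 of Peres and Winkler, which states that after any fixed sequence of site updates started from the maximal configuration $\hat{1}$, the ratio $\prcond{X^t = \cdot}{X^0 = \hat{1}}/\pi(\cdot)$ is pointwise minimized at $\hat{0}$; iterating this gives $\prcond{X^t=\hat{0}}{X^0=\hat{1}, T\le t}\le\pi(\hat{0})$ directly. You instead build a grand monotone coupling of the chain from $\hat 1$ with a stationary copy, observe that $\sigma_t^{\hat 1}=\hat 0$ forces $\sigma_t^{\pi}=\hat 0$, and use that the stationary copy remains $\pi$-distributed even conditioned on the update-site sequence (on which $\{T\le t\}$ is measurable). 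This is a self-contained stochastic-domination argument replacing the paper's ratio-monotonicity lemma; it requires slightly more machinery (the existence of the grand monotone coupling) but avoids citing Peres--Winkler. The $d(t)\ge\tfrac14 s(2t)$ reduction at the end matches the paper's.

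One small gap worth flagging: in the coupon-collector estimate you write ``$n(1-1/n)^t \to e^c$, so $\pr{T>t}\ge 1-e^{-e^c}$.'' The limit statement does not yield the claimed finite-$n$ inequality, and in fact the inequality goes the wrong way: since $(1-1/n)<e^{-1/n}$, at $t=n\log n - cn$ one has $n(1-1/n)^t < e^c$, so your chain of inequalities gives $\pr{T>t}\ge 1-e^{-n(1-1/n)^t}$ with the exponent strictly smaller than $e^c$. To close this you either need a sharper lower bound on $(1-1/n)^t$ (e.g.\ $\log(1-1/n)\ge -\frac{1}{n-1}$, pushing the time to $t=(n-1)\log n - c(n-1)$), or simply cite the standard coupon-collector lower-tail bound as the paper does (its Theorem~1.24 of the Benjamini--Doerr reference). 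The negative-association step $\pr{\cap_v\{v\notin R_t\}}\le\prod_v\pr{v\notin R_t}$ is correct but needs more than the pairwise computation you wrote; it is a genuine negative-association theorem for the occupancy problem.
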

\begin{remark}
In the appendix of \cite{DingPeresarxiv}, Ding and Peres provided an alternative proof of the uniform lower bound for the Ising model that could be generalized in the case of monotone systems.
\end{remark}

\subsection{Literature and organization of the paper}\label{his}
Bidigare, Hanlon and Rockmore (BHR) \cite{BHR} defined the walk on $\mathcal{C}$ and characterized its eigenvalues. Brown and Diaconis \cite{BD} proved that the transition matrix of this Markov chain is diagonalizable and they reproved the BHR result. They also found a necessary and sufficient condition on $w$ so that the walk has a unique stationary distribution. This condition is that $w$ separates the hyperplanes of $\mathcal{A}$, namely that for every $H \in \mathcal{A}$ there is a face $F \nsubseteq H$ such that $w(F)>0$. Under this assumption, they provide a stochastic description for the stationary measure $\pi$: sample without replacement from $w$ and apply these faces in reverse order to any starting chamber (this way the first chosen face is the last to be applied). 
In this paper, $w$ is assumed to be separating, so there exists a notion of convergence to this unique distribution. Athanasiadis and Diaconis have a similar discussion in \cite{AthD}, where they use purely combinatorial methods as well as a coupling argument. Their upper bounds are introduced for the total variation distance. When applied to examples though, these upper bounds are not sharp. Then Brown \cite{Brown} generalized this coupling for the case of semigroups. Pike has studied the eigenvectors of the process in \cite{Pike}.

Some important examples can be found in sections \ref{braid} and \ref{examples}. Section \ref{thm} presents the proof of Theorem \ref{hyperplane}. Finally, section \ref{last} contains the proofs of Theorems \ref{opt2} and Proposition \ref{cut}. Section \ref{Ising} discusses how the techniques of this paper can be used in monotone systems, such as the Ising model and presents the proof of Theorem \ref{mon}.

 \section{Preliminaries}
\subsection{Strong stationary times and cutoff}\label{sep}
Denote by $C^t_{x_0}$ the $t$th configuration of the walk that starts at $x_0$, that is the chamber the walk is on after $t$ steps of running the process.  
Define the separation distance to be
$$s(t)= \max_{x_0 \in \mathcal{C}} \left( 1-  \min_{x \in \mathcal{C}} \frac{\pr{C^t_{x_0}=x}}{\pi(x)} \right).$$
To bound the separation distance of a Markov chain, Diaconis and Aldous \cite{A-P} introduced the following definition.
\begin{definition}
Fix $x_0\in \mathcal{C}$. A strong stationary time is a stopping time $\tau$ such that for every $A \subset \mathcal{C}$ and $t\geq 0$ it holds that
$$\mathbb{P}\left(C_{x_0}^t \in A \vert \tau \leq t\right)= \pi(A),$$
where $C^t$ is the state that the Markov Chain is at time $t$ and $\pi$ is the stationary measure.
\end{definition}
Aldous and Diaconis \cite{A-P} proved the following theorem which is the main link between strong stationary times and separation distance:

\begin{lemma}\label{inequality}
If $\tau$ is a strong stationary time then for $t>0$,
$$s(t) \leq \pr{\tau > t}.$$
\end{lemma}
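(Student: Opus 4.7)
The plan is to unpack the definition of $s(t)$ and use the defining property of the strong stationary time to produce a uniform lower bound on $\pr{C^t_{x_0}=x}/\pi(x)$. The key algebraic observation is that decomposing according to whether $\tau\leq t$ or $\tau>t$ allows me to replace a conditional distribution with $\pi$ on the good event while simply discarding the bad event to obtain a one-sided inequality.

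First I would fix an arbitrary starting chamber $x_0 \in \mathcal{C}$ and an arbitrary target $x \in \mathcal{C}$, and write
$$\pr{C^t_{x_0}=x}\ \geq\ \pr{C^t_{x_0}=x,\ \tau\leq t}\ =\ \pr{C^t_{x_0}=x\ \vert\ \tau\leq t}\cdot \pr{\tau\leq t}.$$
By the defining property of a strong stationary time, applied to the singleton $A=\{x\}$, the conditional probability on the right equals $\pi(x)$. Dividing through by $\pi(x)$ (which is positive for every chamber under the standing separating hypothesis on $w$) gives
$$\frac{\pr{C^t_{x_0}=x}}{\pi(x)}\ \geq\ \pr{\tau\leq t}\ =\ 1-\pr{\tau>t}.$$

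Next I would take the minimum over $x\in\mathcal{C}$ on the left, which preserves the inequality because the right-hand side does not depend on $x$, and then rearrange:
$$1-\min_{x\in\mathcal{C}}\frac{\pr{C^t_{x_0}=x}}{\pi(x)}\ \leq\ \pr{\tau>t}.$$
Finally, taking the maximum over $x_0 \in \mathcal{C}$—again the right side is independent of $x_0$—yields $s(t)\leq \pr{\tau>t}$, which is the desired conclusion.

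There is no substantive obstacle here; the proof is essentially three lines of manipulation of the definitions. The only technical point worth flagging is that the conditional probability must be well-defined, i.e.\ $\pr{\tau\leq t}>0$; in the degenerate case $\pr{\tau\leq t}=0$ the inequality $s(t)\leq 1 = \pr{\tau>t}$ holds automatically since $s(t)\in [0,1]$. Note also that the argument does \emph{not} require independence of $\tau$ and $C^t_{x_0}$: the strong stationary time property is a statement about their joint law, which is exactly the input that the proof consumes.
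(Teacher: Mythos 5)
Your proof is correct and is the standard argument (Aldous--Diaconis); the paper states this lemma as an imported result and gives no proof of its own, so there is nothing internal to compare against. The one small point worth flagging is that the paper's definition of a strong stationary time fixes the starting state $x_0$, so in general both $\tau$ and $\pr{\tau>t}$ may depend on $x_0$, and the argument really yields $1-\min_{x}\pr{C^t_{x_0}=x}/\pi(x)\leq\pr{\tau>t}$ for that fixed $x_0$. Your final maximization over $x_0$ is legitimate here because in this paper's applications the stopping time $T$ (first time the product of chosen faces is a chamber) and hence $\pr{T>t}$ do not depend on the starting chamber, which is exactly the independence you invoked; in a generic chain one would either phrase the bound per starting state or assume such uniformity.
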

This paper studies cutoff with respect to the separation distance. The sequence of walks on the chambers of hyperplane arrangement $\mathcal{A}_n$ will be said to exhibit cutoff at $t_n$ with window $w_n=o(t_n)$ if and only if
$$\lim_{c \rightarrow \infty} \lim_{n \rightarrow \infty} s(t_n-cw_n)= 1 \mbox{ and } \lim_{c \rightarrow \infty} \lim_{n \rightarrow \infty} s(t_n+cw_n)= 0,$$
where $n$ is the dimension of $\mathbb{R}^n$.

\subsection{The rigorous definition of the product}\label{hyp}
This section presents the rigorous setup of the hyperplane arrangement theory. It defines the chambers, the faces and the product algebraically. 

Any face $F$ can be written in the following form:
$$F= \bigcap_{i \in I} H^{\sigma_i(F)}_i,$$
where $\sigma_i(F) \in \{+,-,0 \}$, $H^+_i$ corresponds to the right open half-space determined by $H_i$ (and respectively $H^-_i$ for the left one) and $H^0_i= H_i$. Notice that  $\sigma_i(F)\neq 0$ for all $i$ if and only if $F$ is a chamber. The faces form a semigroup under the following associative product:
\begin{definition}
If $F,G$ are two faces then
$$FG= \bigcap_{i \in I} H^{\sigma_i(FG)}_i,$$
where 
$$\sigma_i(FG)=\begin{cases} \sigma_i(F), & \mbox{if } \sigma_i(F)\neq 0 \\ 
 \\ \sigma_i(G), &  \mbox{ otherwise.} \end{cases}$$
\end{definition}
Notice that $C_{x_0}^t=F^t \ldots F^1x_0$, where $F^t$ if the face picked at time $t$.
Brown \cite{Brown} explains that multiplication of faces satisfies both the ``idempotence" and the ``deletion property", that is if $F$ and $G$ are two faces then
\begin{equation}\label{deletion}
F  F=F \mbox{ and } FGF=FG
\end{equation}

\section{Braid Arrangement and card shuffles}\label{braid}
Many shuffling schemes can be viewed as Markov chains on the chambers of the braid arrangement.
 The braid arrangement consists of the hyperplanes 
\begin{equation}
x_i=x_j.
\end{equation}
%
%
%
It is clear that the chambers are in one to one correspondence with the elements of $S_n$. The faces are exactly the ordered block partitions of $[n]$. For example,
\begin{equation*}
\{1,2,3\}\{4,5\} \{6,7,\ldots n\}
\end{equation*} 
corresponds to 
\begin{align*}
 x_1=x_2=x_3< x_4=x_5< x_6=x_7=\ldots = x_n 
\end{align*}

One general card shuffling scheme studied in this section is the "pop shuffles": consider all ordered block partitions of $[n]= \{1,2,\ldots n \}$ and assign weights to them. In this shuffling scheme one picks an ordered block partition $A_1, A_2,\ldots A_m$ according to the weights and remove from the deck the cards indicated by $A_1$ and put them on the top, keeping their relative order fixed. Then remove the cards indicated by $A_2$ and put them exactly below the $A_1$ cards, keeping their relative order fixed and, so on. This is exactly the hyperplane arrangement walk.

Proposition \ref{cut} can be applied in this case, since $G=S_n$ acts transitively on the chambers. Assuming that the weights satisfy \eqref{coupn}, the following walks exhibit cutoff.
\paragraph{Studying cutoff for the Tsetlin Library.}\label{wrtt}
Let $w_j$ denote the positive weight assigned to the $j$th card. Consider the following Markov Chain on $S_n$: start from a state $x$ in $S_n$. With probability $w(j)$ remove card $j$ and place it on top.
The stationary distribution is the Luce model, which is described as sampling from an urn with $n$ balls without replacement, picking ball $j$ with probability $w(j)$.

The eigenvalues of this process were discovered independently by Donnelly \cite{Do}, Kapoor and Reingold \cite{KR}, and Phatarfod \cite{Phatarfod}.  Brown and Diaconis \cite{BD}, Athanasiadis and Diaconis \cite{AthD} also present the eigenvalues of the Tsetlin Library as an example of a hyperplane walk. Fill gives an exact formula for the probability of any permutation after any number of moves in \cite{Fill-Tsetlin} and discusses cutoff for specific types of weights (see Theorem $4.3$ of \cite{Fill-Tsetlin}). 

\begin{lemma}[Fill, \cite{Fill-Tsetlin}]
For the Tsetlin Library with weights $w(i)$, let $T$ be the first time we have touched $n-1$ cards. Then $$s(t)= \pr{T>t}.$$
\end{lemma}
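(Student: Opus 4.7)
The plan is to establish both directions of the equality $s(t)=\pr{T>t}$, in the spirit of Theorem \ref{opt2}, although the Tsetlin weights are not $S_n$-invariant so that theorem does not apply directly.

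For the upper bound $s(t)\le\pr{T>t}$, I would show that $T$ is a strong stationary time and then invoke Lemma \ref{inequality}. The Brown--Diaconis description realises $\pi$ as the top-to-bottom stacking of a Luce sample $a_1,\ldots,a_n$ drawn without replacement from $w$. The Tsetlin chain implicitly generates such a Luce sample through the first-appearance order of distinct cards: conditional on the first $k-1$ distinct cards picked being $a_1,\ldots,a_{k-1}$, the next distinct card equals $j$ with probability $w_j/(1-\sum_{i<k}w_{a_i})$, exactly the Luce conditional. At time $T$ we have the first $n-1$ Luce draws; the untouched card $c$ is forced to be the $n$-th. Checking that the chamber $\sigma_T$, reconstructed from the last-touch times, corresponds to the Luce sample in the correct order then gives $\sigma_T\sim\pi$ and hence the SST property.

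For the matching lower bound $s(t)\ge\pr{T>t}$, I would identify a worst-case pair $(x_0,y^*)$ and compute the ratio $\pr{\sigma_t = y^*\mid x_0}/\pi(y^*)$ exactly. The key structural fact for the Tsetlin library is that cards not yet touched by time $t$ retain their $x_0$-relative order at the bottom of the chamber. This suggests choosing $y^*$ to agree with $x_0$ on the would-be untouched positions, so that trajectories $F^1,\ldots,F^t$ from $x_0$ to $y^*$ admit a clean classification by whether $T\le t$ and how the post-$T$ picks settle the top. Summing these contributions should yield $\pr{\sigma_t = y^*\mid x_0} = \pr{T\le t}\,\pi(y^*)$, so the minimum ratio equals $\pr{T\le t}$ and therefore $s(t)\ge \pr{T>t}$.

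The main obstacle is the lower bound. The upper bound via Lemma \ref{inequality} is routine once the coupling with the Luce sample is set up, but the matching equality requires pinpointing the right $y^*$ and evaluating the trajectory sum combinatorially. Non-symmetric weights complicate this bookkeeping, and reconciling the ordering induced by last touches with the Luce ordering is the subtle combinatorial content of Fill's argument.
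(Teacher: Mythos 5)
The paper does not reprove this lemma --- it is quoted verbatim from Fill --- but it \emph{could} be obtained from the paper's own machinery: the upper bound $s(t)\le\pr{T>t}$ by showing $T$ is a strong stationary time and invoking Lemma \ref{inequality}, and the lower bound $s(t)\ge\pr{T>t}$ as an instance of Theorem \ref{hyperplane}, which holds for every central arrangement including the braid arrangement. Your two-directional plan is therefore the right shape, and the Luce-sample coupling for the SST half is the standard route, though making the coupling precise (the chamber at time $T$ is governed by last-touch order, not first-appearance order, so reconciling the two is the genuine work in Fill) requires more care than the sketch suggests.

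The lower bound is where your plan actually misfires, and it misfires in a revealing direction. You propose choosing $y^*$ to \emph{agree} with $x_0$ on the would-be untouched positions. That is exactly backwards: with that choice, trajectories on the event $\{T>t\}$ \emph{can} reach $y^*$, so the ``clean classification'' you hope for does not close up and the trajectory sum will not telescope to $\pr{T\le t}\,\pi(y^*)$. The effective choice is antipodal. In the proof of Theorem \ref{hyperplane}, one fixes the chamber $D$ minimizing the conditional ratio and takes $x_0$ to be the chamber obtained by flipping every sign of $D$ (possible because the arrangement is central). On $\{T>t\}$ some hyperplane coordinate of the face product $F^t\cdots F^1$ is still zero, so $C^t_{x_0}$ inherits $x_0$'s sign there and cannot equal $D$; hence $\prcond{C^t_{x_0}=D}{T>t}=0$, and then
$$\pr{C^t_{x_0}=D}=\prcond{C^t_{x_0}=D}{T\le t}\,\pr{T\le t}\le\pi(D)\,\pr{T\le t},$$
giving $s(t)\ge\pr{T>t}$ in one line. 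For the Tsetlin library concretely: take $x_0$ the identity order and $D$ the reversal; if two cards are untouched they sit at the bottom in $x_0$-order, which contradicts $D$'s reversed order, so the event $\{T>t\}$ contributes zero. No combinatorial bookkeeping over trajectories is needed, and the ``subtle content of Fill's argument'' you identify is entirely in the SST half, not the lower bound.
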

Theorem \ref{Tse} is one of the main results of this paper that is not a special case of Theorem \ref{opt2}.
\begin{proof}[Proof of Theorem \ref{Tse}]
Let $T_n$ be the first time that we have picked all cards. 
\begin{enumerate}
\item[(a)] Let $X_i(t)$ be the number of times that card $i$ has been picked at time $t$. Let  $\overline{t}= t_* + \frac{c}{2\min_{i} \{ w_i\}} $ and $Y_i$ be independent random variables following $Pois(w_i \overline{t}) $. So $Y=\sum_i Y_i \thicksim Pois(\overline{t})$. Now $(X_1(t), \ldots, X_n(t))$ at time $t$ have the same joint distribution as $(Y_1, \ldots, Y_n) \vert \sum_i Y_i= t$ namely they both follow a multinomial distribution:
$$\prcond{(Y_1, \ldots, Y_n)=(a_1,\ldots, a_n)}{ \sum_i Y_i= t}= \frac{\frac{e^{-\overline{t}} \prod (w_i\overline{t})^{a_i}}{\prod a_i!}}{\frac{e^{-\overline{t}} \overline{t}^t}{t!}}= t! \prod_i  \frac{w_i^{a_i} }{a_i!}$$$$= \pr{(X_1(t), \ldots, X_n(t))=(a_1, \ldots, a_n)}$$
for every non-negative integers $a_1,\ldots a_n$ whose sum is $t$. Then 
\begin{align}
& s(t)=\pr{T> t } \leq \pr{T_n> t }=\cr
&1- \pr {X_i\left( t \right) \geq 1, \forall i}= 1- \prcond{Y_i \geq 1, \forall i}{\sum_i Y_i=t} \cr
 &\label{increasing} \leq 1 - \sum_{k \leq t} \prcond{ Y_i \geq 1, \forall i}{\sum_i Y_i=k} \pr{\sum_i Y_i=k}\\
&=  1- \pr { \forall i \mbox{ }Y_i \geq 1 , \sum_i Y_i\leq t }\cr
&  \label{finish} \leq 1- \pr{Y_i\geq 1, \forall i} +\pr{ \sum_i Y_i> t},
\end{align}
where \eqref{increasing} holds because $\pr {Y_i \geq 1, \forall i \vert \sum_i Y_i=k }$ is increasing on $k$. Since $\overline{t}= t_* + \frac{c}{2\min_{i} \{ w_i\}} $ and $t= t_* + \frac{c}{\min_{i} \{ w_i\}} $, we have that
\begin{align}
 \eqref{finish} & = 1- \prod (1- e^{-w_i \overline{t}} )  + \pr{  \frac{\sum_iY_i -\overline{t}}{\sqrt{\overline{t}}} > \frac{ \frac{c}{2\min_{i} \{ w_i\}}}{\sqrt{t_* + \frac{c}{2\min_{i} \{ w_i\}}}} } \cr
& \lesssim  \label{Cheb} 1- e^{-\sum_{i=1}^n e^{-w_i\left(t_* + \frac{c}{2\min_{i} \{ w_i\}} \right)}}  +\frac{  4\min_{i} \{ w^2_i\}t_* + 2c \min_{i} \{ w_i\} }{c^2}\\
& \label{explain} \leq 1- e^{-\frac{e^{-\frac{c}{2}}}{2}} +  o(1),
\end{align}
where \eqref{Cheb} is because of Chebychev's inequality and in \ref{explain}, we used the fact that $\sum_{i=1}^n e^{-w_it_*}= \frac{1}{2}$.
\item[(b)]
To prove the lower bound, we use again that $\pr{T>t} $ behaves similarly to $\pr{T_n>t} $. This is illustrated by the following inequality: 
$$ \pr{T> t} \geq \pr{T_n> t  +\frac{c}{\min_{i} \{ w_i\} } }  - \pr{T_n-T \geq  \frac{c}{\min_{i} \{ w_i\} }}.$$
Let $\overline{t}= t_* -  \frac{c}{2\min_{i} \{ w_i\} }$ and let $Y_i$ be a random variable following $Pois(w_i \overline{t}) $. Then again $Y=\sum_i Y_i \thicksim Pois( \overline{t})$ and using Markov's inequality, we have that
\begin{align*}
& \pr{T> t} \geq \pr{T_n> t  +\frac{c}{\min_{i} \{ w_i\} } }  - \pr{T_n-T \geq  \frac{c}{\min_{i} \{ w_i\} }}  \\
& \geq1- \pr {X_i\left( t  +\frac{c}{\min_{i} \{ w_i\} } \right) \geq 1, \forall i} - \frac{1}{c} \\
& =1- \prcond{Y_i \geq 1, \forall i}{\sum_i Y_i=t  +\frac{c}{\min_{i} \{ w_i\} }}  - \frac{1}{c}\\
&= 1-\sum_{k=1}^{\infty}\prcond{Y_i \geq 1, \forall i}{\sum_i Y_i=t  +\frac{c}{\min_{i} \{ w_i\} }} \pr{\sum_i Y_i=k}- \frac{1}{c} \\
&\geq 1- \pr{Y_i \geq 1, \forall i,\sum_i Y_i\geq t  +\frac{c}{\min_{i} \{ w_i\} }} - \pr{\sum_i Y_i < t +\frac{c}{\min_{i} \{ w_i\} }}  - \frac{1}{c}  
\end{align*}
\begin{align*}
& > 1- \pr{Y_i \geq 1, \forall i } - \pr{\frac{\sum_iY_i -\overline{t}}{\sqrt{\overline{t}}} <- \frac{ \frac{c}{2\min_{i} \{ w_i\}}}{\sqrt{t_* - \frac{c}{2\min_{i} \{ w_i\}}}} }  - \frac{1}{c} \\
&\gtrsim  1- e^{-\frac{e^{\frac{c}{2}}}{2}}- \frac{4t_* \min \{w_i^2\} -2c \min \{w_i\} }{c^2}  -  \frac{1}{c}
\end{align*}
\end{enumerate}
\end{proof}
\paragraph{Inverse Riffle Shuffles.}
In inverse riffle shuffles, as presented by Aldous and Diaconis \cite{A-P}, one marks each card with zero or one with probability $1/2$, then moves the ones marked with zero on top, preserving their relative order. This corresponds to sampling among the two-block ordered partitions $\{c_1,c_2,\ldots, c_i\} \{[n] \setminus \{c_1,c_2,\ldots, c_i\} \}$.

Although Bayer and Diaconis \cite{BaD} prove that the optimal upper bound for the total variation mixing time is $\frac{3}{2} \log_2 n$, yet work done by Aldous and Diaconis \cite{A-P} and Assaf, Diaconis and Soundararajan \cite{ADS} proves that the separation distance mixing time is $2 \log_2 n $. Several other metrics have also been studied: \cite{ADS} have studied the $l^{\infty}$ norm as well, while Stark, Gannesh and O'Connell \cite{SGO} studied the Kullback-Leibler distance.

 As discussed in Athanasiadis and Diaconis in \cite{AthD} there is a generalization of this card shuffling, namely marking the cards with a number in $\{0,1,\ldots,a-1\}$ according to the multinomial distribution. Then move the ones marked with zeros on top, keeping their relative order fixed, and continue with the ones marked with $1$ etc. This is a generalization of a strong stationary argument of Aldous' and Diaconis' in \cite{A-P}, giving an upper bound for the general inverse riffle shuffle of the form $2 \log_a n$, while Proposition \ref{cut} guarantees the existence of cutoff as explained thoroughly in Section \ref{last}.
Following Proposition \ref{cut}, we prove that the generalized riffle shuffles has cutoff for the separation distance at $2 \log_a n$ with constant window. 

\paragraph{$k$ to top.}
A natural generalization of random to top is to pick $k$ cards at random and move them to the top, keeping their relative order fixed. This random walk is generated by the uniform measure on faces of the form  
$$\{c_1,c_2,\ldots, c_k\} \{[n] \setminus \{c_1,c_2,\ldots, c_k\} \}$$
This is a new example of card shuffling that falls in the category of hyperplane arrangement walks. Notice that the mixing time for $k$ and $n-k$ are the same. If $1<k \leq \frac{n}{2}$ then $b= \frac{k}{n}$ and $d= \frac{k^2}{n^2}- \frac{k(n-k)}{n^2(n-1)}$ satisfy the conditions of Proposition \ref{cut} and, therefore, the walk exhibits cutoff with respect to the separation distance at $2\log_{\frac{n}{n-k}} n$ with window $\frac{n}{k}$.

\paragraph{Random to top or bottom.}
Consider the card shuffling where a card is chosen at random and is moved to the top or the bottom with probability $1/2$. This is again a random walk on the chambers of the braid arrangement. The faces used are of the form $\{\{c\}, \{[n] \setminus \{c\}\}\}$ and $\{ \{[n] \setminus \{c\}\}, \{c\}\}$ each one having weight $1/2n$. Theorem \ref{opt2} says that the first time all cards have been touched is an optimal strong stationary time. Therefore, there is cutoff for the separation distance at $n \log (n)$ with window $n$. Theorem \ref{Tse} holds if we assign weights to the cards. For some calculations see the work of Diaconis \cite{cutoff}.

\section{Boolean Arrangement and hypercube walks}\label{examples}
The Boolean arrangement consists of the hyperplanes $x_i=0$, $1\leq i \leq n$ in $\mathbb{R}^n$. Each chamber is specified by the sign of its coordinates, in other words they are the $2^n$
orthants in $\mathbb{R}^n$.  In other words each chamber corresponds to a vertex of the $n-$dimensional hypercube. The faces are in bijection with $\{-,0,+\}^n$. The projection $FC$ of a chamber $C$ on a face $F$ is a chamber who adopts all the signs non-zero coordinates of $F$ and the rest  of the coordinates have the signs of $C$.
Under the assumptions of Proposition \ref{cut}, the following examples exhibit cutoff since $G= (\mathbb{Z}/2\mathbb{Z})^n$ acts on the chambers transitively. 
\paragraph{The weighted nearest neighbor walk on the hypercube.}
 If the only positively weighted faces are the $e^{\pm}_i$, whose $i$th coordinate is $\pm$ and the rest are zero, then the Markov Chain corresponds to the weighted nearest neighbor random walk on the hypercube, which corresponds to choosing a coordinate and switching it to $\pm$. Denote the weight of $e^{\pm}_i$ by $ w_i^{\pm}$ and assume they are all positive. 
\begin{lemma}
If $w_i^+=w_i^-$, then $T$, the first time that all coordinates have been picked, is an optimal strong stationary time.
\end{lemma}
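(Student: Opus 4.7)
The plan is to invoke Theorem \ref{opt2} with the group $G = (\mathbb{Z}/2\mathbb{Z})^n$ acting on $\mathbb{R}^n$ by coordinate-wise sign flips. First I would check the two hypotheses of that theorem. The action clearly permutes the coordinate hyperplanes $x_i=0$ setwise (in fact it fixes each one), so the Boolean arrangement is preserved. Transitivity on chambers is equally immediate: each chamber is an orthant, labelled by a sign vector in $\{+,-\}^n$, and any sign vector can be sent to any other by flipping the appropriate subset of coordinates.

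Next I would verify that the measure $w$ is $G$-invariant. The faces on which $w$ is supported are exactly $e^{\pm}_i$, $1\le i\le n$. The generator of $G$ that negates the $i$-th coordinate swaps $e^{+}_i$ with $e^{-}_i$ and leaves every other $e^{\pm}_j$ fixed. Thus the hypothesis $w_i^+=w_i^-$ translates precisely into $G$-invariance of $w$, which is all that is needed for Theorem \ref{opt2} to apply.

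Finally I would identify the stopping time $T$ appearing in Theorem \ref{opt2} with the stopping time in the lemma statement. Using the algebraic description from Section \ref{hyp}, a face $F$ in the Boolean arrangement is a chamber iff $\sigma_i(F)\neq 0$ for every $i$, and by the definition of the face product, $\sigma_i(F^t\cdots F^1)$ equals the first nonzero sign in the sequence $\sigma_i(F^t),\sigma_i(F^{t-1}),\ldots,\sigma_i(F^1)$. Since each sampled face $e^{\pm}_j$ has a single nonzero coordinate, the running product becomes a chamber precisely at the moment every index in $\{1,\ldots,n\}$ has been chosen at least once, which is exactly the stopping time $T$ in the statement. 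Theorem \ref{opt2} then gives $s(t)=\pr{T>t}$ for all $t$, so $T$ is an optimal strong stationary time. I do not anticipate a serious obstacle: the only substantive step is the identification of $T$ in the last paragraph, and it is immediate from the face-product formulas in Section \ref{hyp}.
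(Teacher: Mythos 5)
Your proof is correct, but it takes a genuinely different route from the paper's. The paper proves the lemma directly: it observes that symmetry makes the stationary distribution uniform, and then computes the conditional distribution by hand, noting that on the event $\{T=t\}$ only the sign of the last update of each coordinate matters, so
$\prcond{C^t_{x_0}=x}{T=t}= \prod_{i=1}^n w_i^{x(i)}/(w_i^++w_i^-)= 2^{-n}$.
This shows $T$ is a strong stationary time; combined with Lemma \ref{inequality} for the upper bound and Theorem \ref{hyperplane} for the lower bound, one gets $s(t)=\pr{T>t}$. You instead specialize Theorem \ref{opt2} by taking $G=(\mathbb{Z}/2\mathbb{Z})^n$ acting by sign flips, checking preservation of the arrangement, transitivity on chambers, and $G$-invariance of $w$ (which you correctly observe is exactly the condition $w_i^+=w_i^-$, since each generator of $G$ swaps $e^+_i$ with $e^-_i$ and fixes the other $e^{\pm}_j$), and then identifying the abstract stopping time of Theorem \ref{opt2} with the coordinate coupon-collector time using the face-product formula. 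Both routes are valid. Your version is cleaner and more modular, essentially treating this lemma as a corollary of the general reflection-arrangement theorem, whereas the paper's direct computation has the minor advantage of being explicit and not depending on the (later) proof of Theorem \ref{opt2}, and it also exposes what the conditional law actually looks like. One small presentational note: the paper's proof of this lemma also forward-references Theorem \ref{hyperplane}, so neither approach is fully self-contained at the point where the lemma appears; your version simply pushes more of the work into the general machinery.
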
 
\begin{proof}
If $w_i^+=w_i^-$, then the random walk is symmetric and therefore the stationary measure is the uniform measure on $(\mathbb{Z}/2\mathbb{Z})^n$. For $t\geq n$ we have that
\begin{align}
& \label{st} \prcond{C^t_{x_0}=x}{T=t}= \prod_{i=1}^n \frac{w_i^{x(i)}}{w_i^++w_i^-}= \frac{1}{2^n}
\end{align}
for all $x \in (\mathbb{Z}/2\mathbb{Z})^n$, where $x(i)$ denotes the $i$th coordinate of $x$. The first equality in \eqref{st} holds because we only need to keep track of whether we chose $+$ or $-$ the last time the $i$th coordinate was picked.

This means that $T$ is a strong stationary time. Lemma \ref{inequality} and Theorem \ref{hyperplane} say that
$$s(t)=\pr{T>t}$$
which finishes the proof.

\end{proof}
 The following proposition discusses cutoff for the nearest neighbor random walk on the hypercube for the case where $w_i^+=w_i^-$. 
 \begin{proposition}
 Assume that $w_i^+=w_i^-$ and let $w_i= w_i^++w_i^-$. Let $t_*$ be the unique solution to $\sum_{i=1}^n e^{-w_it_*}= \frac{1}{2}$. If $t_* \min_{i} \{ w_i\} \rightarrow \infty$ and $t_* \min_{i} \{ w^2_i\}$ is bounded for all $n$, then the nearest neighbor random walk on the hypercube exhibits cutoff.
 \end{proposition}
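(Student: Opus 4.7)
The plan is to leverage the preceding lemma, which identifies $s(t) = \pr{T > t}$ where $T$ is the first time every coordinate has been picked. Thus proving cutoff reduces to showing that $\pr{T > t}$ transitions sharply from $1$ to $0$ in a window of size $o(t_*)$ around $t_*$. This is a weighted coupon-collector problem, and I would repeat the poissonization argument from the proof of Theorem \ref{Tse}, which actually becomes slightly cleaner here because $T$ is exactly the ``all coordinates touched'' time rather than the ``$n{-}1$ touched'' time.

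First I would poissonize: introduce independent $Y_i \sim \mathrm{Pois}(w_i t)$, note that $\sum_i Y_i \sim \mathrm{Pois}(t)$, and record the classical fact that conditional on $\sum_i Y_i = t$ the vector $(Y_1,\ldots,Y_n)$ has the same distribution as the coordinate-pick counts $(X_1(t),\ldots,X_n(t))$. For the upper bound I would take $\overline{t} = t_* + c/(2\min_i w_i)$ and $t = t_* + c/\min_i w_i$ and follow the chain of inequalities used in the proof of Theorem \ref{Tse}(a) verbatim, reaching
\[
s(t) \leq 1 - \prod_{i=1}^n \bigl(1 - e^{-w_i \overline{t}}\bigr) + \pr{\textstyle\sum_i Y_i > t}.
\]
The product term converges to $1 - e^{-e^{-c/2}/2}$ by the definition of $t_*$, together with the inequality $1-x \geq e^{-x - x^2}$ applied to $x = e^{-w_i \overline{t}}$, where the quadratic correction is controlled by the hypothesis $t_* \min_i w_i^2 = O(1)$. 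The tail term is handled by Chebyshev, and is $o(1)$ because $t_* \min_i w_i \to \infty$. Sending $c \to \infty$ gives $s(t) \to 0$.

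For the lower bound, taking $t = t_* - 2c/\min_i w_i$ and $\overline{t} = t_* - c/\min_i w_i$, I would repeat the manipulations in the proof of Theorem \ref{Tse}(b), but now omit the correction term $\pr{T_n - T \geq \cdot}$ (since here $T$ already equals the ``all touched'' time), arriving at
\[
\pr{T > t} \geq 1 - \pr{Y_i \geq 1,\ \forall i} - \pr{\textstyle\sum_i Y_i < t}.
\]
The first probability tends to $e^{-e^{c/2}/2}$ by the same first-order expansion as above, and the second is again $o(1)$ by Chebyshev, so the lower bound tends to $1 - e^{-e^{c/2}/2}$, which approaches $1$ as $c \to \infty$. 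This establishes cutoff at time $t_*$ with window $1/\min_i w_i = o(t_*)$.

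The main obstacle is entirely bookkeeping: verifying that the two hypotheses $t_* \min_i w_i \to \infty$ and $t_* \min_i w_i^2$ bounded jointly suffice to make all tail and Taylor-expansion errors uniform across coordinates and vanishing in the limit. The first hypothesis gives enough concentration of $\sum_i Y_i$ for Chebyshev to bite at the scale $1/\min_i w_i$, while the second controls the quadratic correction when passing from $\sum_i \log(1 - e^{-w_i \overline{t}})$ to $-\sum_i e^{-w_i \overline{t}}$; together they ensure the $o(1)$ error terms inherited from the Theorem \ref{Tse} computation actually do tend to zero.
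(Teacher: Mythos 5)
Your proposal is correct and follows exactly the route the paper intends: the paper's proof of this proposition is the one-line remark ``The proof is similar to Theorem \ref{Tse},'' and you have reconstructed that poissonization argument in full, correctly noting the simplification that here $T$ already equals the ``all coordinates touched'' time (so the $\pr{T_n - T \geq \cdot}$ correction term disappears). Two very minor points, neither of which affects validity: with your choice $\overline{t} = t_* - c/\min_i w_i$ the product bound gives $e^{-e^c/2}$ rather than the $e^{-e^{c/2}/2}$ you wrote (you seem to have carried over the exponent from the paper's choice $\overline{t} = t_* - c/(2\min_i w_i)$), and the quadratic correction $\sum_i e^{-2w_i\overline{t}}$ is controlled by $t_*\min_i w_i \to \infty$ rather than by $t_*\min_i w_i^2 = O(1)$ (the latter is what makes Chebyshev's bound vanish).
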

 
\begin{proof}
The proof is similar to Theorem \ref{Tse}.
\end{proof}

A special case has been studied by Aldous \cite{Aldous_correct} and Diaconis and Shashahani \cite{PDMS}. They studied the case where $w_i^{\pm}= \frac{1}{2n}$ and they have proved the cutoff for the total variation distance mixing time at $\frac{n}{2} \log n + cn$.

\paragraph{A non-local walk on the hypercube.}
Consider the following walk on the hypercube:  let $1< k \leq \frac{n}{2}$. Pick $k$ coordinates at random and flip a fair coin for each one of them to determine whether to turn them into ones or zeros. Then, $b= \frac{k}{n}$ and $d= \frac{k^2}{n^2}- \frac{k(n-k)}{n^2(n-1)}$ and, therefore, Proposition \ref{cut} says that the walk exhibits cutoff at $ \log_{\frac{n}{n-k}} n$ with window $\frac{n}{k}$.

\section{The proof of Theorem \ref{hyperplane}}\label{thm}

\begin{proof}[Proof of Theorem \ref{hyperplane}]
Fix $t$ and choose a chamber $D $ so that 
$$ \prcond{C_{x_0}^t=D}{T\leq t} \leq \pi(D),$$
Let $x_0$ be the chamber that occurs if we flip all coordinates of $D$. This is possible because all hyperplanes pass through the origin. Let $T$ be the first time that the product of the faces picked is a chamber then for $t \geq a$ we have that
$$C^t_{x_0}(D)= \prcond{C_{x_0}^t=D}{T\leq t} \pr{T\leq t}\leq \pi(D) \pr{T\leq t}.$$
Therefore,
$$s(t)\geq 1-   \frac{\pr{C^t_{x_0}=D}}{\pi(D)} \geq 1-\pr{T\leq t}= \pr{T>t},$$
giving that $s(t) \geq \pr{T>t}$ for all $t>0$, proving Theorem \ref{hyperplane}.
\end{proof}

\section{Cutoff cases-Reflection arrangements}\label{last}
In this section, assume that a group $G$ acts on $\mathbb{R}^n$ preserving the hyperplane arrangement $\mathcal{A}$ so that the action restricted on the chambers is transitive. Assume that the weights are $G-$invariant 
At the end of this section, we prove that under a few more assumptions, the walk exhibits cutoff. As mentioned in page $9$ of Brown and Diaconis \cite{BD} the stationary measure is the uniform measure on the chambers. 
Let $T$ be the first time that the product of the faces picked is a chamber. We can rewrite Theorem \ref{opt2} as 
\begin{lemma}\label{T3}
Let $\mathcal{A}$ be a hyperplane arrangement. If the weights are $G-$invariant then $T$ is a strong stationary time.
\end{lemma}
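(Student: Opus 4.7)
The plan is to verify directly the definition of strong stationary time: for every chamber $D$ and every $t \geq 0$,
\[
\prcond{C^t_{x_0} = D}{T \leq t} = \pi(D) = \frac{1}{|\mathcal{C}|},
\]
where the second equality comes from the Brown--Diaconis description of $\pi$ (under the $G$-transitivity hypothesis the stationary measure is uniform on $\mathcal{C}$).

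The first step is to show that on the event $\{T \leq t\}$, the configuration $C^t_{x_0}$ is a function only of the sequence of faces drawn and not of $x_0$. If $T = s \leq t$, then $F^s F^{s-1} \cdots F^1$ already has every coordinate $\sigma_i \neq 0$, so the rule $\sigma_i(FG) = \sigma_i(F)$ whenever $\sigma_i(F) \neq 0$ forces $F^t F^{t-1} \cdots F^1$ to remain a chamber and, moreover, $(F^t \cdots F^1)\, x_0 = F^t \cdots F^1$. Hence on $\{T \leq t\}$ we have $C^t_{x_0} = F^t \cdots F^1$ regardless of $x_0$.

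The second step is to exploit the $G$-action. Any $g \in G$ permutes the hyperplanes and the half-spaces $H_i^{\pm}$, so it acts by bijections on $\mathcal{F}$ and on $\mathcal{C}$, and face multiplication is equivariant, $g(FG) = (gF)(gG)$, directly from the sign-vector definition. Combined with $G$-invariance of the weights, the law of the i.i.d.\ sequence $(F^1, \ldots, F^t)$ coincides with that of $(gF^1, \ldots, gF^t)$, and the event $\{T \leq t\}$ is $G$-invariant. Together with the first step, this gives
\[
\prcond{C^t_{x_0} = D}{T \leq t} = \prcond{C^t_{x_0} = g^{-1}D}{T \leq t}
\]
for every $g \in G$, so the conditional law of $C^t_{x_0}$ given $T \leq t$ is $G$-invariant on $\mathcal{C}$. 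Since $G$ acts transitively on $\mathcal{C}$, the only such measure is uniform, i.e.\ equal to $\pi$.

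The main obstacle is purely combinatorial: checking carefully the $G$-equivariance of the face product, and the fact that chambers act as absorbing elements on the right (so that $C^t_{x_0}$ ``forgets'' $x_0$ the instant the running product becomes a chamber). Both are routine consequences of the sign-vector definition of multiplication together with the idempotence/deletion identities in \eqref{deletion}, but they are the only nontrivial piece of machinery. Once they are in place, the strong stationary time property is immediate, and the stronger identity $s(t) = \pr{T > t}$ of Theorem \ref{opt2} follows by combining Lemma \ref{inequality} with Theorem \ref{hyperplane}.
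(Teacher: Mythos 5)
Your proof is correct and rests on the same underlying idea as the paper's: the $G$-invariance of the weights forces the conditional law of $C^t_{x_0}$ given $\{T\le t\}$ to be the uniform (stationary) measure on chambers. The paper verifies this by writing out the ratio of weighted sums for $t=1,2$ and then asserting ``inductively, because of the weight invariant action of $G$''; you make that inductive step precise by observing that $g(FG)=(gF)(gG)$, that $\{T\le t\}$ is $G$-stable, and that the law of the i.i.d.\ face sequence is $G$-invariant, so the conditional law of $F^t\cdots F^1$ is $G$-invariant and hence uniform by transitivity. Your version also isolates explicitly the (implicit in the paper) fact that once the running product is a chamber, $C^t_{x_0}$ no longer depends on $x_0$; this is a cleaner, essentially equivalent route.
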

\begin{proof}
Let $\ell$ be the number of chambers of $\mathcal{A}$. To prove that 
$$\prcond{C_{x_0}^t=C}{T=t}= \pi(C) $$
for every $t$ and $x_0, C \in \mathbb{C}$,  
consider at first the case $t=1$ and remember that $w(c)$ denotes the weight of a chamber $C$ when viewed as a face.
$$\prcond{C^1_{x_0}=C}{T=1}= \frac{w(C)}{\sum_{D \in \mathcal{C}} w(D)} = \frac{1}{\ell}.$$
 Because of the symmetry condition, we have that
$$\prcond{C_{x_0}^2=C}{T\leq 2}=\frac{\sum_{F_{i_1}F_{i_2}=C } w(F_{i_1}) w(F_{i_2})}{\sum_{D \in \mathcal{C} }  \sum_{ F_{i_1}F_{i_2}=D} w(F_{i_1}) w(F_{i_2})}= \frac{1}{\ell} $$
and inductively, because of the weight invariant action of $G$. 
$$\prcond{C_{x_0}^t=C}{T=t} = \frac{1}{\ell}. $$
\end{proof}
\begin{proof}[Proof of Theorem \ref{opt2}]
Theorem \ref{hyperplane} says that 
$$s(t) \geq \pr{T>t} $$
and Lemmas \ref{T3} and \ref{inequality} say that 
$$s(t) \leq \pr{T>t} $$
and therefore we have that 
$$ s(t)=\pr{T>t}. $$
\end{proof}

Having established that $s(t)=\pr{T>t}$ we can now prove Proposition \ref{cut}, which talks about cutoff.
\begin{proof}[Proof of Proposition \ref{cut}]
Let $t=  \log_{\frac{1}{1-b}} m +c \frac{1}{ b}$  then 
$$s(t)=  \pr{T> t}= \pr{\exists H_i\in \mathcal{A}: \forall F \mbox{ picked by time }t, F \subset H_i} \leq m\left( 1-b\right)^t \leq  e^{-c}.$$
So  $\lim_{c \rightarrow \infty} \lim_{n \rightarrow \infty} s(t)= 0$.

To get a lower bound, we need to solve the following coupon collector problem: we are trying $m $ different coupons, where $m $ is the number of hyperplanes in $\mathcal{A}$. Picking a face $F$ with probability $w_F$ corresponds to picking the coupons that correspong to the non-zero coordinates of $F$. Let $X$ be the random variable that describes this coupon collecting in one step. Let $X^t$ denote the set of coupons picked by time $t$. Let $t= \log_{\frac{1}{1-b}} m -c \frac{1}{ b}$, then 
$$\expect{|[m] \setminus X^t|}= m (1-b)^t \approx e^c$$
and 
$$\expect{|[m] \setminus X^t|^2}= \sum_{i,j \notin X^t} \pr{i \notin X^t\mbox{ and }j \notin X^t} = \sum_{i,j \notin X^t} (1-\pr{i \mbox{ or }j \in X })^t= $$ $$ m (1-b)^t  + m(m-1)(1-2b + d)^t.$$
Therefore, the variance is
\begin{align*}
\var{|[m] \setminus X^t|} & = m (1-b)^t + m(m-1)(1-2b + d)^t - m^2(1-b )^{2t} \\
&\leq m (1-b)^t + m(m-1)(1-b)^{2t} - m^2(1-b )^{2t}  \leq e^{c} ,
\end{align*}
because $0 \leq 1-2b + d \leq1-2b + b^2$.
Chebychev's inequality gives that if $t= \frac{1}{b} \log_{\frac{1}{b}} m -c \frac{1}{ b}$ then
$$s(t)=  \pr{T> t}= \pr{|[m] \setminus X^t|\geq 1} \geq 1- \frac{e^c}{(e^c- 1)^2}$$
 and therefore $\lim_{c \rightarrow \infty} \lim_{n \rightarrow \infty} s(t)= 1$.
This finishes the proof that the walk has cutoff at $ \log_{\frac{1}{1-b}} m $ with window $\frac{1}{ b}$.
\end{proof}

\section{A uniform lower bound for monotone systems}\label{Ising}
 This section presents a uniform lower bound for Glauber dynamics in any monotone spin system, as defined in the introduction, generalizing the result of Ding and Peres for the Ising model \cite{DingPeres}. 
 
Let $T$ be the first time we have picked all sites of $V$. Let $x$ be the configuration where all sites of $V$ are assigned with the biggest value of $S$ and let $y$ be the configuration where all sites are assigned with the smallest value of $S$. Let $X^t$ is the configuration of the Glauber dynamics after $t$ steps and let $\pi$ denote the stationary measure. To prove Theorem \ref{mon}, we will need the following lemma:
\begin{lemma}\label{least}
At any time time $t$, we have that 
$$\prcond{X^t=y}{X^0=x, T \leq t} \leq \pi(y).$$
\end{lemma}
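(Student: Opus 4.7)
The plan is a short monotone-coupling argument against a stationary copy of the chain. Introduce a second copy $(Y^t)_{t \geq 0}$ of the Glauber dynamics, initialized at $Y^0 \sim \pi$, and couple it to $(X^t)$ by having both chains use the same site-selection sequence $(U_s)_{s \geq 1}$ (with $U_s$ uniform on $V$). Given configurations with $X^{s-1} \geq Y^{s-1}$ pointwise, the monotone-system hypothesis says exactly that the update distribution $\{\pi((X^{s-1})^r_{U_s})/\pi((X^{s-1})^\bullet_{U_s})\}_{r \in S}$ stochastically dominates $\{\pi((Y^{s-1})^r_{U_s})/\pi((Y^{s-1})^\bullet_{U_s})\}_{r \in S}$, so I can draw the two new spin values jointly in a way that keeps the top chain's value at $U_s$ at least the bottom chain's. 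Since $x$ is the maximum element of $\Omega$ we have $X^0 = x \geq Y^0$, and induction then gives $X^s \geq Y^s$ for every $s$.

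The second step uses that $y$ is the minimum configuration of $\Omega$. On the event $\{X^t = y\}$ the inequality $Y^t \leq X^t = y$ forces $Y^t = y$ as well. Because $T$ is a functional of the schedule $(U_s)$ alone, it takes the same value in both chains, so as coupled events $\{X^t = y,\, T \leq t\} \subseteq \{Y^t = y,\, T \leq t\}$, giving
$$\pr{X^t = y,\; T \leq t \mid X^0 = x} \;\leq\; \pr{Y^t = y,\; T \leq t}.$$

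Finally, I would observe that each single-site Glauber update preserves $\pi$ and that $(U_s)$ is drawn independently of the spin updates. Thus for any realisation of the schedule, an easy induction gives $Y^s \sim \pi$ for every $s$; in particular $Y^t$ is independent of $(U_s)_{s \leq t}$ and hence of the event $\{T \leq t\}$. So $\pr{Y^t = y,\; T \leq t} = \pi(y)\, \pr{T \leq t}$, and dividing by $\pr{T \leq t}$ yields $\prcond{X^t = y}{X^0 = x,\, T \leq t} \leq \pi(y)$, as required. The only substantive step is the construction of the monotone coupling, which is immediate from the stochastic-dominance condition in the definition of a monotone system; the rest is bookkeeping, so I do not anticipate a real obstacle.
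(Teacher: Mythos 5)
Your proof is correct, and it takes a genuinely different route from the paper's. The paper does not construct a coupling: it cites Lemma 2.1 of Peres and Winkler, which asserts that after updating along any fixed schedule starting from the top configuration $x$, the likelihood ratio $\prcond{X^t = \cdot}{X^0 = x, \text{schedule}}/\pi(\cdot)$ attains its minimum at the bottom configuration $y$; averaging over schedules with $T \leq t$ preserves this, and the minimum of a likelihood ratio cannot exceed $1$. You instead build a monotone grand coupling of $X^t$ (started at $x$) with a stationary copy $Y^t$ (started from $\pi$) on a shared site-selection schedule, use $X^t \geq Y^t$ together with minimality of $y$ to get the event inclusion $\{X^t = y, T \leq t\} \subseteq \{Y^t = y, T \leq t\}$, and then exploit the independence of $Y^t$ from the schedule (hence from $\{T \leq t\}$) to factorize the right-hand side. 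Your version has the advantage of being self-contained — it proves the monotonicity fact rather than invoking it — at the cost of being slightly longer; the paper's version is shorter but leans on an external lemma whose proof is essentially the coupling you wrote. One small presentational point worth making explicit in your write-up: the independence $Y^t \perp (U_1,\dots,U_t)$ holds because the \emph{conditional} law of $Y^t$ given any fixed schedule is $\pi$ (single-site Glauber updates preserve $\pi$), and the joint coupling with $X$ does not alter the marginal law of $Y$; this is exactly what licenses the factorization $\pr{Y^t = y, T \leq t} = \pi(y)\,\pr{T\leq t}$.
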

\begin{proof}
Lemma 2.1 of Peres and Winkler \cite{PeresWinkler} says that since $y$ is the smallest element in the ordering, $\frac{\prcond{X^t=y}{X^0=x, \mbox{node } v \mbox{ was selected}}}{\pi(y)} $ is the smallest value of $\frac{\prcond{X^t=\cdot}{X^0=x, \mbox{node } v \mbox{ was selected}}}{\pi(\cdot)} $. Inductively, this 
gives that $\frac{\prcond{X^t=y}{X^0=x, T \leq t}}{\pi(y)}$ is the smallest value of  $\frac{\prcond{X^t=\cdot}{X^0=x, T \leq t}}{\pi(\cdot)}$. This value has to be less than or equal to one because both $\prcond{X^t=\cdot}{X^0=x, T \leq t}$ and $\pi$ are probability measures. This finishes the proof.
\end{proof}
\begin{proof}[Proof of Theorem \ref{mon}]
Notice that
$$P^t(x,y)= \prcond{X^t=y}{X^0=x, T \leq t}\pr{T \leq t}.$$
Therefore, $s(t) \geq 1- \frac{\prcond{X^t=y}{X^0=x, T \leq t}\pr{T \leq t}}{\pi(y)}$. Using Lemma \ref{least} we get that 
$$s(t) \geq \pr{T>t}.$$
Just as in  Theorem 1.24 of \cite{BenDoerr}, if $t= n \log n - cn$ then
$$s(t) \geq 1- e^{-e^c}.$$
For the second part, equation (6.12) of \cite{Peresbook}
gives that  if $t= \frac{n}{2} \log n - cn$ then
$$d(t) \geq \frac{1}{4} s(2t ) \geq \frac{1}{4}- \frac{1}{4}e^{-e^c}.$$
\end{proof}

\section{Acknowledgements}
I would like to thank  Persi Diaconis, Bal\' azs Gerencs\' er, Dan Jerison and  Allan Sly for valuable communications.

 \bibliographystyle{plain}
\bibliography{cut-off-hyperplane}

\end{document}